\theoremstyle{definition}
\newtheorem{Theorem}{\bf Theorem}[section]
\newtheorem{Lemma}[Theorem]{\bf Lemma} 
\newtheorem{Proposition}[Theorem]{\bf Proposition} 
\newtheorem{Corollary}[Theorem]{\bf Corollary}
\newtheorem{Remark}[Theorem]{\bf Remark}
\newtheorem{Example}[Theorem]{\bf Example}
\newtheorem{Definition}[Theorem]{\bf Definition}
\newenvironment{thm}{\begin{Theorem}}{\end{Theorem}}
\newenvironment{lem}{\begin{Lemma}}{\end{Lemma}}
\newenvironment{prop}{\begin{Proposition}}{\end{Proposition}}
\newenvironment{rem}{\begin{Remark}}{\end{Remark}}
\newenvironment{dfn}{\begin{Definition}}{\end{Definition}}
\def\tr{\mathop{\operator@font tr}\nolimits}  
\def\dist{\mathop{\operator@font dist}\nolimits}  
\def\div{\mathop{\operator@font div}\nolimits}  
\def\exp{\mathop{\operator@font exp}\nolimits}  
\def\essinf{\mathop{\operator@font {\it ess}.\inf}\nolimits}  
\def\esssup{\mathop{\operator@font {\it ess.}\sup}\nolimits}  
\newcommand{\R}{\mathbb{R}}
\newcommand{\N}{\mathbb{N}}
\newcommand{\M}{{\cal M}}
\def\M{\mathcal{M}}
\def\A{\mathcal{A}}
\def\a{\alpha}
\def\phi{\varphi}
\def\e{\varepsilon}
\def\fr{\frac}
\def\le{\left}
\def\ri{\right}
\def\fr{\frac}
\def\ol{\overline}
\begin{document}

\title{ABP maximum principles for fully nonlinear integro-differential equations\\ with unbounded inhomogeneous terms}

\author{
Shuhei Kitano \footnote{e-mail: sk.koryo@moegi.waseda.jp}\\
\ \\
Department of Applied Physics,\\
Waseda University\\
Tokyo, 169-8555,\\
JAPAN
}
\date{}

\pagenumbering{roman}
\maketitle
\pagenumbering{arabic}

\begin{abstract}
Aleksandrov-Bakelman-Pucci maximum principles are 
studied for a class of 
fully nonlinear integro-differential equations of order $\sigma\in [2-\e_0,2)$, where $\e_0$ is a small constant depending only on given parameters. The goal of this paper is to improve an estimate 
of Guillen and Schwab (Arch. Ration. Mech. Anal., {\bf 206}, 2012) in order to avoid the dependence on  $L^\infty$ norm of 
 the inhomogeneous term. 
\end{abstract}

\section{Introduction}
\label{sec:intro}

In this paper, we study the fully nonlinear nonlocal equation of the form:
\begin{equation}\label{super}
\left\{
\begin{split}
\M^-u(x)&
\leq f(x)\quad\mbox{in }B_1,\\
u(x)&\geq0\quad\mbox{in }\R^n\setminus B_1,
\end{split}
\right.
\end{equation}
where 
by setting 
$$\delta(u,x,y):=u(x+y)+u(x-y)-2u(x),$$
we define the minimal fractional Pucci operator:
$$
\M^-u(x):=\inf_{\lambda\leq Tr(A)\mbox{ and }O\leq A\leq\Lambda Id}\left\{(2-\sigma)\int_{\R^n}\delta(u,x,y)\frac{y^TAy}{|y|^{n+\sigma+2}}dy\right\}.
$$
Throughout this paper, we suppose that
$$n\geq 2,\quad 0<\lambda\leq n\Lambda,$$
and define $B_r:=\{ y \in \R^n \ : \ |y|<r\}$.  
Furthermore,  $Q_r\subset\R^n$ denotes the open cube with its center at $0$ and its side-length $r$. 
We also  set $B_r(x):=x+B_r$ and $Q_r(x):=x+Q_r$. 

We will also use the maximal Pucci operator defined by 
$$
\M^+u(x):=\sup_{\lambda\leq Tr(A)\mbox{ and }O\leq A\leq\Lambda Id}\left\{(2-\sigma)\int_{\R^n}\delta(u,x,y)\frac{y^TAy}{|y|^{n+\sigma+2}}dy\right\}.
$$

The main purpose of this paper is to show the Aleksandrov-Bakelman-Pucci (ABP %
for short) maximum principle depending only on $L^n$ norm of $f^+:=\max\{ f,0\}$. 
\begin{thm}\label{main}
There exist 
constants $\hat C>0$ and $\e_0\in (0,1)$ depending only on $n,\lambda$ and $\Lambda$ such that if $u\in L^\infty(\R^n)\cap LSC(\R^n)$ is a viscosity supersolution of \eqref{super} with $\sigma\in [2-\e_0,2)$ and $f\in C(\overline{B_1})$, then it follows that
\begin{equation}\label{ABP}
-\inf_{B_1}u\leq \hat C\|f^+\|_{L^n(\{u\leq0\})}.
\end{equation}
\end{thm}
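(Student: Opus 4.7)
The approach is to follow the concave-envelope / gradient-image strategy that underlies both the classical ABP argument and its nonlocal version due to Guillen and Schwab, but to replace the pointwise $L^\infty$-type Hessian inequality on the contact set by a Calder\'on--Zygmund-style dyadic decomposition that naturally produces the $L^n$ quantity $\|f^+\|_{L^n(\{u\le 0\})}$ instead of an $L^\infty$ bound on $f^+$.

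First I would introduce the concave envelope $\Gamma$ in $B_3$ of the function equal to $\min(u,0)$ in $B_1$ and to $0$ outside, and set $K:=\{u=\Gamma\}\cap B_1$. Since $\Gamma$ is concave and coincides with $u$ on $K$, any $x\in K$ admits a supporting vector $p=\nabla\Gamma(x)$ with $u(x+y)\ge u(x)-p\cdot y$ for all $y$ such that $x\pm y\in B_3$; in particular $\delta(u,x,y)\ge 0$ on that set. The classical area-formula reduction gives $(-\inf_{B_1}u)^n\le C|\nabla\Gamma(K)|$, so the theorem reduces to the measure estimate $|\nabla\Gamma(K)|\le \hat C^{\,n}\int_K f_+^{\,n}$.

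The pointwise analysis at $x\in K$ combines the supporting-plane inequality with the decomposition $\delta(u,x,y)\le \delta(\Gamma,x,y)+\text{tail}$ inside $\M^-u(x)\le f(x)$. Taking $A=\Lambda\,\mathrm{Id}$ in the definition of $\M^-$ and splitting into small-$|y|$ and large-$|y|$ contributions should yield a nonlocal Hessian bound of the form
\[
(2-\sigma)\int_{B_{r(x)}}\delta(\Gamma,x,y)\,\frac{dy}{|y|^{n+\sigma}}\le C\bigl(f_+(x)+(2-\sigma)\|u^-\|_{L^\infty}\bigr),
\]
where $r(x)$ is the opening of the supporting paraboloid of $\Gamma$ at $x$. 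I would then partition $K$ into dyadic cubes adapted to $r(x)$ and $f_+(x)$, rescale the operator to unit size on each cube (where $\M^-$ maps to an operator of the same class), apply the corresponding local-scale estimate to bound $|\nabla\Gamma(K\cap Q)|\le C\int_{K\cap 2Q}f_+^{\,n}$, and sum over the finitely overlapping cubes.

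The main obstacle, and the reason for the restriction $\sigma\in[2-\e_0,2)$, is the tail term $(2-\sigma)\|u^-\|_{L^\infty}$ that persists at every scale; this is precisely the term forcing the $L^\infty$ dependence in Guillen--Schwab. My plan to dispose of it is to run the entire argument on the family of balls $B_\rho$, $\rho\in[1/2,1]$, deriving an inequality of the form
\[
-\inf_{B_\rho}u\le \hat C\,\|f^+\|_{L^n(\{u\le 0\})}+\theta\bigl(-\inf_{B_{(\rho+1)/2}}u\bigr),
\]
with $\theta=\theta(\e_0)<1$ once $\e_0$ is small enough; a standard iteration in $\rho$ then absorbs the last term into the left-hand side and produces \eqref{ABP}. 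Verifying that the loss $\theta$ depends only on $n,\lambda,\Lambda$ (and not on $u$ or $\sigma$) is the technical heart of the proof.
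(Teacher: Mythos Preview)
Your proposal does not match the paper's argument, and it contains a genuine gap.

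\textbf{The gap.} You locate the obstruction to a pure $L^n$ bound in the tail term $(2-\sigma)\|u^-\|_{L^\infty}$ coming from large $|y|$, and you build a radial iteration in $\rho$ to absorb it. But that is not where the $L^\infty$ dependence in Guillen--Schwab sits. In their estimate $-\inf_{B_1}u\le C_0\|f^+\|_{L^\infty}^{(2-\sigma)/2}\|f^+\|_{L^n}^{\sigma/2}$ it is $\|f^+\|_{L^\infty}$, not $\|u^-\|_{L^\infty}$, that appears, and it enters through the restriction on the scale $r$ at which the envelope can be compared with the potential (roughly $r\lesssim -u(x_0)/f(x_0)$ at a minimum point $x_0$). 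Your radial iteration does not touch this. Moreover, the step you leave unargued---the local bound $|\nabla\Gamma(K\cap Q)|\le C\int_{K\cap 2Q}(f^+)^n$ on each dyadic cube after rescaling---is exactly the difficulty: the nonlocal equation at a contact point yields at best a bound involving $\sup_{2Q}f^+$, not $\int_{2Q}(f^+)^n$, and converting the former into the latter is the entire content of the theorem. Asserting that ``the corresponding local-scale estimate'' gives the $L^n$ quantity is circular.

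\textbf{What the paper actually does.} The paper does not use the classical concave envelope or the gradient-image area formula at all. It keeps the fractional envelope $\Gamma_\sigma$ and its Riesz potential $P$ from Guillen--Schwab, for which $-\inf P\le \overline C\|f^+\|_{L^n}$ is already available (Lemma~\ref{PRE1}); the only loss is the factor $r_0^{-(2-\sigma)}$ relating $-\inf\Gamma_\sigma$ to $-\inf P$, where $r_0$ is the scale controlled by $\|f^+\|_{L^\infty}$ in the Guillen--Schwab argument. The new idea is a \emph{bootstrap on the ABP constant itself}: assuming the mixed estimate
\[
-\inf_{B_1}u\le C_{i-1}\|f^+\|_{L^\infty}^{(2-\sigma)/2}\|f^+\|_{L^n}^{\sigma/2}+\e_0^{-1/n}\|f^+\|_{L^n}
\]
holds for every supersolution, one applies it not to $u$ but to a barrier-shifted function $w=v+\eta$ (Proposition~\ref{BR}) to obtain a Krylov--Safonov type measure-decay statement (Lemmas~\ref{PE1}--\ref{PE2}); the Calder\'on--Zygmund decomposition is used here, not to partition the contact set. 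This decay enlarges the admissible $r_0$ in the Riesz-potential comparison enough to return the same mixed estimate with $C_i=\tfrac12 C_{i-1}$. Iterating in $i$ and sending $C_i\to 0$ removes the $L^\infty$ term. So the iteration is over the constant in the ABP inequality, not over spatial radii, and it is this self-improvement mechanism that your outline is missing.
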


The ABP maximum principle plays a fundamental role in the regularity theory of fully nonlinear equations (see \cite{Caf, CafCab} for instance). 
The ABP maximum principle for the second order equation 
 presents a bound for the infimum of supersolutions by $L^n$ norm of inhomogeneous terms. 
In the case of  nonlocal equations, the ABP maximum principle is 
investigated 
in \cite{CS09,GS12,MS} and references therein. 
More precisely, 
for viscosity solutions, 
Caffarelli and Silvestre prove 
that the maximum is estimated 
from above by Riemann sums 
of $f$ instead of their 
$L^n$ norms in \cite{CS09}. 
A more quantitative version of the ABP maximum principle is 
established 
by Guillen and Schwab in \cite{GS12}. For an application of results in \cite{GS12}, $W^{\sigma,\e}$-estimate is 
obtained by Yu \cite{Y}, which was inspired by earlier works by \cite{E,L}. 
The ABP maximum principle for 
strong solutions of 
second order  equations with 
integral terms 
is obtained by Mou and \'Swi\c{e}ch 
although the nonlocal part is regarded as a lower order term in \cite{MS}. 

We shall recall the estimate shown by Guillen and Schwab from \cite{GS12}: 
\begin{thm}\label{GS}
There exists a constant $C_0=C_0(n,\lambda)$ such that if $u\in L^\infty(\R^n)\cap LSC(\R^n)$ is a viscosity supersolution of \eqref{super} with $\sigma\in(0,2)$ and $f\in C(\overline{B_1})$, then it follows that
\begin{equation}\label{ABP0}
-\inf_{B_1}u\leq C_0\|f^+\|_{L^\infty(\{u=\Gamma_\sigma\})}^{(2-\sigma)/2}\|f^+\|_{L^n(\{u=\Gamma_\sigma\})}^{\sigma/2},
\end{equation}
where $\Gamma_\sigma$ is defined in Section 2.
\end{thm}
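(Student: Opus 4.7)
The plan is to carry out the classical ABP strategy à la Caffarelli, adapted to the nonlocal framework as in Caffarelli--Silvestre \cite{CS09}. Write $M:=-\inf_{B_1}u$ and assume $M>0$ (otherwise the estimate is trivial). Following \cite{CS09}, I would take $\Gamma_\sigma$ to be (the negative of) the convex envelope in $B_3$ of the function equal to $-u^-$ on $B_1$ and $0$ elsewhere, so that $\Gamma_\sigma$ is concave, $-u^-\le\Gamma_\sigma\le 0$ on $B_3$, and its contact set $K:=\{x\in B_1:u(x)=\Gamma_\sigma(x)\}$ collects the points where $u$ is touched from above by an affine function. The argument then combines a pointwise estimate at contact points with the classical area formula applied to $\nabla\Gamma_\sigma$.

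\emph{Pointwise estimate at a contact point.} At $x_0\in K$, the concavity of $\Gamma_\sigma$ yields a supporting affine $l$ with $u(x_0)=l(x_0)$ and $u\le l$ on $B_3$; a standard Aleksandrov-type argument promotes this one order and produces a paraboloid opening $h(x_0)$ controlling the second-order behavior of $\Gamma_\sigma$ at $x_0$, together with a smooth paraboloid touching $u$ from above at $x_0$. Using that paraboloid as a test function in $\M^-u(x_0)\le f(x_0)$, I would split the defining integral into a near field $B_r(x_0)$, on which $\delta^-(u,x_0,y)\le Ch(x_0)|y|^2$, and a tail on $\R^n\setminus B_r(x_0)$, on which $\delta^-(u,x_0,y)\le CM$ (using $u\ge -M$ in $B_1$ and $u\ge 0$ outside). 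With the weight $(2-\sigma)y^T A y/|y|^{n+\sigma+2}$, these contributions to $-\M^-u(x_0)$ scale as $(2-\sigma)h(x_0)r^{2-\sigma}$ and $(2-\sigma)Mr^{-\sigma}$ respectively; balancing with $r\sim(M/h(x_0))^{1/2}$ produces the key pointwise bound
\begin{equation*}
h(x_0)^{\sigma/2}\,M^{(2-\sigma)/2}\le C f^+(x_0),\qquad\text{equivalently}\qquad h(x_0)\le C\bigl(f^+(x_0)\bigr)^{2/\sigma}M^{-(2-\sigma)/\sigma}.
\end{equation*}

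\emph{Area formula and conclusion.} The Aleksandrov normal-mapping argument applied to the concave function $\Gamma_\sigma$ gives $|\nabla\Gamma_\sigma(K)|\ge c M^n$, and since the (a.e.-defined) Jacobian of $\nabla\Gamma_\sigma$ on $K$ is bounded by $Ch(x)^n$, one obtains $M^n\le C\int_K h(x)^n\,dx$. Substituting the pointwise estimate, and using the elementary inequality $(f^+)^{2n/\sigma}\le\|f^+\|_{L^\infty(K)}^{n(2-\sigma)/\sigma}(f^+)^n$,
\begin{equation*}
M^{2n/\sigma}=M^{n}\cdot M^{n(2-\sigma)/\sigma}\le C\int_K\bigl(f^+(x)\bigr)^{2n/\sigma}dx\le C\|f^+\|_{L^\infty(K)}^{n(2-\sigma)/\sigma}\|f^+\|_{L^n(K)}^{n},
\end{equation*}
so extracting the $(\sigma/2n)$-th root recovers \eqref{ABP0}.

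\emph{Main difficulty.} The delicate point is the pointwise estimate: one must legitimately use a paraboloid coming from the (merely semi-concave) function $\Gamma_\sigma$ as a test function for the semicontinuous viscosity supersolution $u$, which requires an inf-convolution regularization of $\Gamma_\sigma$ and careful control of the resulting error as the regularization parameter vanishes, uniformly in $\sigma\in(0,2)$. The tail contribution is precisely what forces the $L^\infty$-factor in \eqref{ABP0}, and removing it for $\sigma$ close to $2$ is the motivation for Theorem~\ref{main}.
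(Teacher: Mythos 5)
Your plan is the Caffarelli--Silvestre contact-set/area-formula route, and the step that carries all the weight --- ``the (a.e.-defined) Jacobian of $\nabla\Gamma_\sigma$ on $K$ is bounded by $Ch(x)^n$'' with $h(x)^{\sigma/2}M^{(2-\sigma)/2}\le Cf^+(x)$ pointwise on the contact set --- is precisely what is not available for nonlocal operators, and is the reason this theorem required new ideas in \cite{GS12}. First, the orientation is off: for a supersolution of $\M^-u\le f$ the envelope lies \emph{below} $u$ (in the paper $\Gamma_\sigma\le -u^-\le u$, not $-u^-\le\Gamma_\sigma\le 0$), supporting affine functions touch $u$ from below, and viscosity testing is from below, so a ``paraboloid touching $u$ from above'' is not an admissible test function; moreover, with the correct orientation one has $\delta(u,x_0,y)\ge 0$ whenever $x_0\pm y\in B_3$, so your near-field bound on $\delta^-$ is vacuous, and an upper bound on $-\M^-u(x_0)$ cannot be combined with $\M^-u(x_0)\le f(x_0)$ to bound $h(x_0)$ from above. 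The real mechanism is a sliding-paraboloid argument: if the envelope has opening $h$ at scale $r$ near $x_0$, a paraboloid touches $u$ from below at some unknown point $x_1\in B_r(x_0)$, and testing there gives roughly $\lambda\,h\,r^{2-\sigma}\le \sup_{B_r(x_0)}f^+ + C(2-\sigma)\Lambda M r^{-\sigma}$. This controls the flatness of the envelope only at \emph{one} scale $r$ (at or above the balance radius $\sim((2-\sigma)M/h)^{1/2}$); for smaller $r$ the tail term dominates and the equation gives no constraint, so you cannot let $r\to 0$, cannot replace $\sup_{B_r(x_0)}f^+$ by $f^+(x_0)$, and never reach an a.e.\ infinitesimal bound on $\det D^2\Gamma$. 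What this route honestly yields is a bound on the Monge--Amp\`ere mass $|\nabla\Gamma(B_r(x_0))|$ on balls of these definite radii, and assembling them by a covering produces exactly the Riemann-sum estimate of \cite{CS09} (sums of $(\max_{\overline Q_j}f^+)^n|Q_j|$ over cubes of diameter tied to $2^{-1/(2-\sigma)}$), not \eqref{ABP0} with an honest $L^n$ norm. An additional point specific to this paper's class: the kernels $y^TAy/|y|^{n+\sigma+2}$ with $\lambda\le Tr(A)$, $0\le A\le\Lambda Id$ are not pointwise bounded below in the angular variable (only their angular average is), so even the one-scale measure estimates of \cite{CS09} do not transfer verbatim. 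Your ``main difficulty'' (regularizing to justify the test function) is therefore not the real obstruction.

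For comparison, the paper does not prove Theorem \ref{GS} at all; it quotes it from \cite{GS12}, whose argument is structurally different from yours: $\Gamma_\sigma$ is not the convex envelope but the fractional envelope (characterized by $E_\sigma\Gamma_\sigma\ge 0$ off the contact set), the second-order information is recovered through its Riesz potential $P$, the classical ABP estimate applied to $P$ gives $-\inf P\le\overline C\|f\|_{L^n(\{u=\Gamma_\sigma\})}$ (Lemma \ref{PRE1}), and a measure estimate on rings around the minimum point gives $-\inf\Gamma_\sigma\le C\|f^+\|_\infty^{(2-\sigma)/2}(-\inf P)^{\sigma/2}$; combining the two yields \eqref{ABP0}. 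Incidentally, your replacement of the fractional envelope by the convex envelope would be harmless if the rest worked, since convex functions are admissible competitors in the definition of $\Gamma_\sigma$, so contact points of the convex envelope are contained in $\{u=\Gamma_\sigma\}$; but the pointwise Jacobian bound on which your area-formula step rests is missing, so the proposed proof does not go through as written.
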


Here, we shall  
briefly explain our
proof of Theorem \ref{main}.
In \cite{GS12}, Guillen and Schwab 
introduce 
the fractional order envelope $\Gamma_\sigma$ and its Riesz potential $P$, which 
interprets the fractional order equation as a second order partial differential equation. 
Applying the ABP maximum principle for second order equations, they show that $P$ is bounded 
from below 
by $L^n$ norm of $f$ as follow:
\[
-\inf P\leq \overline{C}\|f\|_{L^n(\{u=\Gamma_\sigma\})}.
\]
A more precise statement of the above is presented  in Lemma \ref{PRE1}. In order to estimate $\inf\Gamma_\sigma$ by $\inf P$, they obtain the lower bound of 
\[
|\{\Gamma_\sigma\leq \inf\Gamma_\sigma/2\}\cap (B_r(x_0)\setminus B_{r/2}(x_0))|,
\]
which is 
derived from 
a simple property of the equation. 
Here, $x_0$ is 
a point 
such that $\inf\Gamma_\sigma=\Gamma_\sigma(x_0)<0$,  and $r\in(0,-\Gamma(x_0)/(2f(x_0)))$. Hence it follows from the estimate that 
\[-\inf \Gamma_\sigma\leq C\|f^+\|_\infty^{(2-\sigma)/2}(-\inf P)^{\sigma/2}.
\]
Theorem \ref{GS} follows from above two inequalities. 

In our proof of  Theorem \ref{main}, we introduce a new iteration procedure, which is based on an  
argument to show the weak Harnack inequality in the regularity theory for 
fully nonlinear PDE originated by Caffarelli in \cite{Caf}, 
in order to obtain the bound of
$$|\{\Gamma_\sigma\leq \inf\Gamma_\sigma/2\}\cap(Q_r(x_0)\setminus Q_{r/4^{1/n}}(x_0))|$$
from the ABP maximum principle. 
Using this bound, one can show a new ABP maximum principle from 
the original 
one.  
We inductively obtain a sequence of constants in ABP maximum principles, which yields \eqref{ABP} 
by taking 
the limit of this sequence.

We 
notice 
that 
$\{u=\Gamma_\sigma\}$ is replaced by $\{u\leq0\}$ in \eqref{ABP} 
while $\{u=\Gamma_\sigma\}\subset\{u\leq0\}$. 
However \eqref{ABP} has enough information 
 to prove H\"older 
 continuity 
estimates.

\begin{rem}
Although we will prove Theorem \ref{main} under the assumption that $f\in C(\overline{B_1})$, it remains to hold if we assume $f\in C(B_1)\cap L^n(B_1)$. Indeed, this is observed as follows: Since we may suppose that $\inf_{B_1}u<0$, we can find $x_0\in B_1$ such that $\inf_{B_1}u=u(x_0)$. We choose small $\eta>0$ such that $x_0\in B_{1-\eta}$. Notice that $f\in  C(B_1)\cap L^n(B_1)\subset C(\overline{B_{1-\eta}})$. Applying Theorem \ref{main} to $u(x)-\inf_{\R^n\setminus B_{1-\eta}}u$, we have
\[
u(x_0)\leq -\inf_{\R^n\setminus B_{1-\eta}}u+\hat{C}(1-\eta)^{\sigma-1}\|f\|_{L^n(\{u\leq0\}\cap B_{1-\eta})}.
\]
Letting $\eta\to0$, we obtain \eqref{ABP}.
\end{rem}


\section{Preliminaries}

Throughout this paper, we let   $|\cdot|$ be the Euclidean norm. 

For a measurable subset $A$ of $\R^n$, $|A|$ is its Lebesgue measure, 
 and $\chi_A$ is its indicator function. 
 We denote by $S^{n-1}$ the $n-1$ dimensional unit sphere. We write $u^-:=\max\{-u,0\}$. We say $Q\subset Q_1$ is a dyadic cube if there exists $m\in \N$ such that $Q$ is obtained by dividing $Q_1$ to $2^{nm}$ cubes, 
  and $\tilde{Q}$ is the predecessor of $Q$ if $Q$ is one of $2^n$ cubes 
 constructed 
  from dividing $\tilde{Q}$. 
  We recall a lemma 
  named 
the Caldel\'on-Zygmund cube decomposition.


\begin{lem}\label{CalZyg}(Lemma 4.2 in \cite{CafCab}) 
Let  $A\subset B\subset Q_1$ be 
measurable 
sets and $0<\delta<1$ such that

\begin{tabular}{ll}
(a)& $|A|\leq\delta$,\\
(b)&if $Q$ is a dyadic cube such that $|A\cap Q|>\delta|Q|$, then $\tilde{Q}\subset B$, where\\
& $\tilde{Q}$ is the predecessor of $Q$.
\end{tabular}
Then, $|A|\leq\delta |B|$.
\end{lem}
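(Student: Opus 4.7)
My plan is to prove this via the classical Calder\'on--Zygmund stopping-time decomposition applied to the dyadic grid on $Q_1$. I would start by splitting $Q_1$ into its $2^n$ dyadic children, and inductively, for each dyadic cube $Q$ under consideration, test the density condition $|A\cap Q|>\delta|Q|$: if it holds, I stop and place $Q$ in a family $\mathcal{F}=\{Q_j\}$ of ``selected'' cubes; otherwise I subdivide $Q$ further and repeat. The hypothesis $|A|\leq\delta=\delta|Q_1|$ guarantees that $Q_1$ itself would not be selected if we tested it, so the recursion is well-initialized. The cubes in $\mathcal{F}$ are pairwise disjoint by construction, and by the Lebesgue differentiation theorem applied to $\chi_A$, almost every $x\in A$ has dyadic density tending to $1$ and is therefore captured inside some $Q_j\in\mathcal{F}$.

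Next I would exploit the stopping condition together with hypothesis (b). For each selected $Q_j$, its predecessor $\tilde{Q}_j$ was not stopped at, so either $\tilde{Q}_j=Q_1$ (in which case $|A\cap\tilde{Q}_j|\leq\delta=\delta|\tilde{Q}_j|$ by (a)) or $\tilde{Q}_j$ was tested and found to satisfy $|A\cap\tilde{Q}_j|\leq\delta|\tilde{Q}_j|$. Either way,
\[
|A\cap\tilde{Q}_j|\leq\delta|\tilde{Q}_j|,
\]
and by hypothesis (b), $\tilde{Q}_j\subset B$.

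The last step is to pass from the (possibly overlapping) collection $\{\tilde{Q}_j\}$ to a disjoint subcollection. Since any two dyadic cubes are either disjoint or nested, I would extract the maximal elements $\{R_k\}$ of $\{\tilde{Q}_j\}$; these are pairwise disjoint, each satisfies $|A\cap R_k|\leq\delta|R_k|$ (since each $R_k$ is itself some $\tilde{Q}_j$ or equals $Q_1$), and each is contained in $B$. Every $Q_j$ lies in some $R_k$, so $A$ is covered (up to null sets) by $\bigcup_k R_k$. Therefore
\[
|A|=\sum_k|A\cap R_k|\leq\delta\sum_k|R_k|=\delta\Bigl|\bigcup_k R_k\Bigr|\leq\delta|B|,
\]
which is the desired inequality.

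The main subtle point, and the only step that is not purely formal, is showing that almost every point of $A$ actually ends up in some selected cube $Q_j$; this is where Lebesgue differentiation enters, and without it one could imagine a point of $A$ surviving all levels of the decomposition. The remaining steps are bookkeeping with dyadic cubes, and the passage to the maximal subcollection $\{R_k\}$ is what converts the estimate into the clean form $|A|\leq\delta|B|$.
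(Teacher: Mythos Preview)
Your proof is correct and is essentially the standard Calder\'on--Zygmund stopping-time argument. The paper does not supply its own proof of this lemma; it simply cites it as Lemma~4.2 of Caffarelli--Cabr\'e, so there is nothing to compare against beyond noting that your argument coincides with the classical one given in that reference.
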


We recall the definition of viscosity 
 solutions. 
 We say that $\phi$ touches $u$ from below at $x$ in a neighborhood $U$ whenever
\[
u(x)=\phi(x)\quad\mbox{and}\quad u(y)\geq \phi(y)\quad\mbox{for }y\in U.
\]
\begin{dfn}
$u\in LSC(\R^n)\cap L^\infty(\R^n)$ is a  viscosity supersolution of $\M^-u=f$ in $B_1$ if whenever $\phi$ touches $u$ from below at $x\in  B_1$ in some neighborhood 
$U$ for $\phi \in C^2(\ol{U})$,   
\[
v:=
\left\{
\begin{split}
&\phi\quad {\rm in}\ U\\
&u\quad {\rm in}\ \R^n\setminus U
\end{split}
\right.
\]
satisfies that $\M^-v(x)\leq f(x)$.
\end{dfn}
Hereafter we often use the following convention: 
``$u$ satisfies $\M^-u\leq f$" means ``$u$ is a viscosity supersolution of $\M^-u=f$".

We introduce 
some 
notations 
from \cite{GS12}: for $\a\in (0,2)$,  
\[
\A(\alpha):=\pi^{\alpha-n/2}\frac{\Gamma((n-\alpha)/2)}{\Gamma(\alpha/2)},  
\]
we note that 
$\A(\alpha)/\alpha$ converges to a positive constant as $\alpha\to0$. 
Hence, we can find 
 a constant $c_0=c_0(n)>0$ such that 
\begin{equation}\label{c0}
\left(\frac{\sqrt{n}}{2}\right)^{-n+2-\sigma}\cdot\frac{\A(2-\sigma)}{8(1-4^{-(2-\sigma)/n})}\geq c_0\quad (\forall \sigma \in (0,2)).
\end{equation}

Let $v$ be a bounded function satisfying 
\[
\int_{\R^n}\frac{|\delta(v,x,y)|}{|y|^{n+\sigma}}dy<\infty 
\quad (x\in \R^n).
\]
We define its fractional order hessian by
\begin{equation*}
D^{\sigma}v(x):=\frac{(n+\sigma-2)(n+\sigma)}{2}\A(2-\sigma)\int_{\R^n}\frac{y\otimes y}{|y|^{n+\sigma+2}}\delta(v,x,y)dy
,  
\end{equation*}
which is a real symmetric $n\times n$ matrics valued function in $\R^n$. The first eigenvalue of $D^\sigma v$ is defined by 
\begin{equation*}
E_\sigma v(x):=\inf_{\tau\in S^{n-1}}\{(D^\sigma v(x)\tau)\cdot\tau\}.
\end{equation*}
We consider the fractional order envelope $\Gamma_\sigma$ of $u$, which is an analogue of the classical convex envelope, defined by
\begin{equation*}
\Gamma_{\sigma}(x):=\sup\{v(x):E_\sigma(v)\geq0\mbox{ in }B_3,\mbox{ and }v\leq -u^{-}\mbox{ in }\R^n\}.
\end{equation*}
We can see 
that $\Gamma_\sigma$ is 
the unique viscosity solution of the obstacle problem:
\begin{equation*}
\left\{
\begin{split}
\min\{E_\sigma (\Gamma_\sigma),-u^--\Gamma_\sigma\}&=0\quad\mbox{in }B_3,\\
\Gamma_\sigma&=0\quad\mbox{in }\R^n\setminus B_3.
\end{split}
\right.
\end{equation*}

We also consider the Riesz potential $P$ of $\Gamma_\sigma$,
\[
P(x):=\A(2-\sigma)\int_{\R^n}\frac{\Gamma_\sigma(y)}{|x-y|^{n-(2-\sigma)}}
dy
,\quad (x\in\R^n). 
\]
The following estimate is proved in 
(6.1) in \cite{GS12}. 

\begin{lem}\label{PRE1}
There exist $\overline{C},R>1$ depending only on $n$ and $\lambda$ such that if $u$ satisfies \eqref{super} with $\sigma\in(0,2)$ and $f\in C(\overline{B_1})$, then it follows that
\begin{equation}\label{pre1}
-\inf_{B_R}P\leq \overline{C}\|f\|_{L^n(\{u=\Gamma_\sigma\})}.
\end{equation}
\end{lem}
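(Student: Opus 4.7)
The strategy is to use the Riesz potential $P$ of $\Gamma_\sigma$ to convert the nonlocal problem into a local second order PDE and then invoke the classical ABP maximum principle. The underlying miracle is the pointwise identification
\[
D^2 P(x) = \kappa(n)\, D^\sigma \Gamma_\sigma(x)
\]
for a purely dimensional constant $\kappa(n)$, valid at least at points where $\Gamma_\sigma$ can be touched from below by a $C^2$ test function, and interpreted in the viscosity sense elsewhere. This identity is built into the normalizations of $P$ and $D^\sigma$: the kernel $|x-y|^{-(n-(2-\sigma))}$ is, up to the factor $\A(2-\sigma)$, the fundamental solution of $(-\Delta)^{(2-\sigma)/2}$, so differentiating it twice in $x$ reproduces the integrand used to define $D^\sigma$. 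The uniform positivity of the collected constants in $\sigma\in(0,2)$ is exactly the content of \eqref{c0}.

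The next step turns \eqref{super} into a second order Pucci inequality for $P$. Directly from the definitions, the nonlocal operator factors through $D^\sigma$ as $\M^- u = c(n,\sigma)\, \M^-_{\mathrm{class}}(D^\sigma u)$, where $\M^-_{\mathrm{class}}$ is the classical Pucci extremal operator with the same parameters $\lambda,\Lambda$. On the contact set $\{u=\Gamma_\sigma\}$ the obstacle is active, so $\Gamma_\sigma$ touches $u$ from below, and the viscosity supersolution inequality transfers to $\M^- \Gamma_\sigma\le f$; via the identity above this reads $\M^-_{\mathrm{class}}(D^2 P)\le c_1(n)\, f$. Off the contact set the obstacle problem forces $E_\sigma(\Gamma_\sigma)\ge 0$, hence $D^\sigma \Gamma_\sigma$ is positive semidefinite, hence so is $D^2 P$. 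In both regions one therefore has
\[
\M^-_{\mathrm{class}}(D^2 P) \le c_1(n)\, f\, \chi_{\{u=\Gamma_\sigma\}}
\]
in the viscosity sense in all of $\R^n$.

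Finally, I would apply the classical second order ABP to $P$. Since $\Gamma_\sigma\le 0$ is supported in $B_3$, the Riesz potential $P$ is nonpositive, bounded, and decays to $0$ at infinity, so $\inf P$ is attained in some ball $B_R$ with $R=R(n,\lambda)$ (large enough to contain both $B_3$ and the normal map image used in ABP). The standard ABP estimate for $\M^-_{\mathrm{class}}$ on $B_R$ then yields
\[
-\inf_{B_R} P \le \overline{C}(n,\lambda)\, \|f\|_{L^n(B_R\cap\{u=\Gamma_\sigma\})} \le \overline{C}(n,\lambda)\, \|f\|_{L^n(\{u=\Gamma_\sigma\})},
\]
which is the required bound.

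The main obstacle is making the identity $D^2 P = \kappa(n)\, D^\sigma \Gamma_\sigma$ rigorous at non-smooth points: $\Gamma_\sigma$ solves only an obstacle problem and need not be $C^2$. The way through is to test $\Gamma_\sigma$ from below by smooth functions at contact points and transport them through the Riesz potential, using the fractional semiconvexity $E_\sigma(\Gamma_\sigma)\ge 0$ to control the tail of the integral defining $D^\sigma$. Keeping the final constant $\overline{C}$ independent of $\sigma\in(0,2)$ is the companion delicate point, and is precisely what \eqref{c0} is arranged to supply.
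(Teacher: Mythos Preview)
The paper does not prove this lemma at all: immediately before the statement it says ``The following estimate is proved in (6.1) in \cite{GS12}'' and gives no further argument. Your sketch is therefore not being compared to anything in the present paper; rather, you have outlined the Guillen--Schwab proof that the paper is citing. The strategy you describe --- identify $D^2P$ with $D^\sigma\Gamma_\sigma$ up to a constant via Riesz potential calculus, transfer the supersolution inequality on the contact set and the fractional convexity $E_\sigma(\Gamma_\sigma)\ge 0$ off it to a second--order Pucci inequality for $P$, and then apply the classical ABP --- is indeed the backbone of the argument in \cite{GS12}.

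Two points of caution. First, your invocation of \eqref{c0} is misplaced: that inequality is not used here but only later, in the proof of Lemma~\ref{prop}, to pass from $-P(x_0)$ back to $-\Gamma_\sigma(x_0)$ (see the chain ending in \eqref{prop1}). The $\sigma$--uniformity of $\overline{C}$ in Lemma~\ref{PRE1} comes instead from the specific normalizations of $\A(2-\sigma)$ and the prefactor $(n+\sigma-2)(n+\sigma)/2$ built into $D^\sigma$ and $P$, which are arranged in \cite{GS12} so that the identity $D^2P=D^\sigma\Gamma_\sigma$ carries no residual $\sigma$--dependent factor. Second, the claim that the Pucci inequality for $P$ holds ``in all of $\R^n$'' is not quite right: the obstacle problem only gives $E_\sigma(\Gamma_\sigma)\ge 0$ in $B_3$, and outside $B_3$ one has $\Gamma_\sigma\equiv 0$ but $D^\sigma\Gamma_\sigma$ need not be nonnegative. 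In \cite{GS12} this is handled by showing separately that $P$ behaves well enough outside a fixed ball (this is where the choice of $R$ enters) so that the classical ABP on $B_R$ applies; your decay argument gestures at this but would need to be made precise.
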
 

We 
use 
a modification of a 
 barrier function in  
 Corollary 9.3 in \cite{CS09}. 
 Although the next proposition holds even when $\sigma\in (0,1]$, we shall restrict ourselves only for 
 $\sigma\in (1,2)$ to simplify our statements in the proceeding argument.

\begin{prop}\label{BR}
There exist $M_1, M_2$ and a $C^{1,1}$ function $\eta$ depending only on $n,\lambda$ and $\Lambda$ 
such that

\noindent(i) $supp\ \eta\subset B_{2\sqrt{n}}$,

\noindent(ii) $\eta\leq-2\mbox{ in }Q_3\mbox{ and }\|\eta\|_\infty\leq M_1$,

\noindent(iii) for every $\sigma\in(1,2)$,  we have $\M^+ 
\eta(x)\leq M_2\xi(x)$ 
 in $\R^n$, 
 where $\xi$ is a continuous function with support inside $B_{1/4}$ and such that $0\leq\xi\leq1$.
\end{prop}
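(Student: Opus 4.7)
I would follow the barrier construction of Corollary~9.3 in Caffarelli--Silvestre \cite{CS09}, adapted to the specific geometric data ($B_{2\sqrt n}$, $Q_3$, $B_{1/4}$) of the statement and specialized to $\sigma\in(1,2)$.

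The plan is to take $\eta(x)=-\psi(|x|)$ radial, with $\psi:[0,\infty)\to[0,\infty)$ a $C^{1,1}$ profile vanishing on $[2\sqrt n,\infty)$ and satisfying $\psi\geq 2$ on $[0,3\sqrt n/2]$, built by smoothly interpolating a concave-shaped ``cap'' out to $r=2\sqrt n$. Setting $M_1:=\psi(0)$, items (i) and (ii) are immediate from $Q_3\subset B_{3\sqrt n/2}\subset B_{2\sqrt n}$. All the work is in (iii), which I verify by splitting according to the location of $x$. For $|x|\geq 2\sqrt n$ one has $\eta(x)=0$ and $\eta\leq 0$ globally, so $\delta(\eta,x,y)\leq 0$ for every $y$ and $\M^+\eta(x)\leq 0$ trivially. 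For $|x|<1/4$, the $C^{1,1}$ regularity of $\eta$ yields $|\delta(\eta,x,y)|\leq\|\eta\|_{C^{1,1}}|y|^2$ for small $|y|$ and $|\delta(\eta,x,y)|\leq 4M_1$ otherwise; splitting the integral at $|y|=1$ and using the $(2-\sigma)$ normalization to cancel the singularity at $y=0$ gives $\M^+\eta(x)\leq M_2$ for some $M_2=M_2(n,\lambda,\Lambda)$, uniformly in $\sigma\in(1,2)$. Setting $\xi:=M_2^{-1}(\M^+\eta)_+$ then produces a continuous function (as $\M^+\eta$ is continuous for $C^{1,1}$ $\eta$) taking values in $[0,1]$ and, assuming the intermediate-region estimate below, having support in $\overline{B_{1/4}}$.

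The main obstacle is the annular estimate on $B_{2\sqrt n}\setminus B_{1/4}$, where I need $\M^+\eta(x)\leq 0$. Pointwise midpoint concavity $\delta(\eta,x,y)\leq 0$ for all $y$ is unachievable because $\eta$ is compactly supported: whenever $x\pm y$ both lie outside $\mathrm{supp}\,\eta$ one has $\delta(\eta,x,y)=-2\eta(x)>0$. My strategy is to show that the symmetric matrix
\[
M(x):=\int_{\R^n}\delta(\eta,x,y)\frac{y\otimes y}{|y|^{n+\sigma+2}}\,dy
\]
is negative semidefinite at every such $x$, so that $\tr(AM(x))\leq 0$ for every admissible Pucci matrix $A\geq 0$ and hence $\M^+\eta(x)\leq 0$. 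By radial symmetry $M(x)$ is diagonal in the frame of $\hat x$ and its orthogonal complement, reducing the condition to one radial and one tangential scalar inequality. Each is obtained by quantitatively balancing a strongly negative contribution from $|y|$ moderate (where the concave shape of $\eta$ along a line through $x$ dominates) against the positive tail from $|y|$ large, with the $(2-\sigma)$ prefactor rendering the tail uniformly small in $\sigma\in(1,2)$. Calibrating the profile $\psi$ so that this balance holds simultaneously for every $x\in B_{2\sqrt n}\setminus B_{1/4}$, with constants independent of $\sigma$, is the technical heart of the proof.
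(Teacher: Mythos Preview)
The paper does not prove this proposition; it simply records it as a modification of Corollary~9.3 in \cite{CS09} and moves on. Your plan to follow that construction is therefore exactly the paper's own approach, and you in fact supply more detail than the paper does.

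One caveat on your sketch: the mechanism that forces $\M^+\eta\le 0$ on the annulus $B_{2\sqrt n}\setminus B_{1/4}$ is not the $(2-\sigma)$ prefactor (which is $O(1)$ for $\sigma$ bounded away from $2$), but the specific profile used in \cite{CS09}, namely a scaled and truncated $|x|^{-p}$ with $p=p(n,\lambda,\Lambda)$ chosen large. Its steep decay is what produces enough integrated concavity to dominate the positive far-field contribution uniformly in $\sigma\in(1,2)$; a generic smooth ``concave cap'' will not achieve this. So when you carry out the calibration you describe as the technical heart, you should follow the power-law construction of \cite{CS09} rather than improvise a profile.
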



\section{Proof of Theorem \ref{main}}
Set 
\begin{equation}\label{mu}
\mu:=\frac{1}{(64M_2\sqrt{n})^n},
\end{equation}
where $M_2$ is the constant in Proposition \ref{BR}. 
We  then freeze 
small 
$\e_0\in (0,1)$ 
such that for $\sigma\in(1,2)$,
\begin{align}
C_0M_2^{(2-\sigma)/2}&\leq
\e_0^{-\sigma/(2n)}\label{e1},\\
(256\sqrt{n}\e_0^{-1/n})^{\e_0}(c_0^{-1}\overline{C})^{\sigma-1}M_1^{1+\mu^{-1}\log2}&\leq\e_0^{-1/n},\label{e3}\\
c_0^{-1}\overline{C}&\leq\e_0^{-1/n}, \label{e4}\\ 
2^{\e_0}(c_0^{-1}\overline{C})^{\sigma/2}M_1^{\frac 12+\mu^{-1}\log2}\e_0^{\sigma/2n}&\leq2^{-1}\label{e5},
\end{align}
where 
$C_0$, $c_0$ and $\overline C$ are constants form 
Theorem \ref{GS}, 
\eqref{c0}  and 
 Lemma \ref{PRE1}, respectively,  and 
 $M_1$ and $M_2$ are 
 those 
 from Proposition \ref{BR}. 

We will see that Theorem \ref{main} 
holds true when 
$$\hat{C}:=\e_0^{-1/n}$$ 
in view of Lemma \ref{prop} below, by sending $i\to\infty$ in \eqref{ABPi}.

Throughout this paper, we remind that this $\e_0\in (0,1)$ satisfies \eqref{e1}-\eqref{e5} in the above.  


\begin{lem}\label{prop}
For any $i=0,1,...$ and $u$ satisfying \eqref{super} with $\sigma\in [2-\e_0,2)$ and $f\in C(\overline{B_1})$, it follows that
\begin{equation}\label{ABPi}
-\inf_{B_1}u\leq C_i \|f^+\|_{L^\infty(\{u\leq0\})}^{(2-\sigma)/2}\|f^+\|_{L^n(\{u\leq0\})}^{\sigma/2}+\e_0^{-1/n}\|f^+\|_{L^n(\{u\leq0\})}
,
\end{equation}
where $C_0$ is 
from Theorem \ref{GS}
, and $C_{i+1}:=2^{-1}C_{i}=2^{-(i+1)}C_0$.
\end{lem}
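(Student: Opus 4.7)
The plan is to proceed by induction on $i$. For the base case $i=0$, inequality \eqref{ABPi} follows immediately from Theorem \ref{GS}: the inclusion $\{u=\Gamma_\sigma\}\subset\{u\leq 0\}$ yields
\begin{equation*}
-\inf_{B_1}u\leq C_0\|f^+\|_{L^\infty(\{u\leq 0\})}^{(2-\sigma)/2}\|f^+\|_{L^n(\{u\leq 0\})}^{\sigma/2},
\end{equation*}
and adding the nonnegative term $\e_0^{-1/n}\|f^+\|_{L^n(\{u\leq 0\})}$ only weakens this inequality.

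For the inductive step, assume \eqref{ABPi} at level $i$ with constant $C_i$ and deduce it at level $i+1$ with $C_{i+1}=C_i/2$. One may assume $m:=-\inf_{B_1}u>0$ and pick $x_0\in \overline{B_3}$ with $\Gamma_\sigma(x_0)=-m$. Lemma \ref{PRE1} provides the upper bound $-\inf_{B_R}P\leq\overline{C}\|f^+\|_{L^n(\{u=\Gamma_\sigma\})}$. Restricting the Riesz-potential integral defining $P(x_0)$ to the dyadic annulus $Q_r(x_0)\setminus Q_{r/4^{1/n}}(x_0)$ and using $\Gamma_\sigma\leq -m/2$ on the relevant subset gives a pointwise lower bound
\begin{equation*}
-P(x_0)\geq c_0\,m\,r^{2-\sigma}\cdot\frac{\bigl|\{\Gamma_\sigma\leq -m/2\}\cap(Q_r(x_0)\setminus Q_{r/4^{1/n}}(x_0))\bigr|}{r^n},
\end{equation*}
with $c_0$ coming from \eqref{c0}. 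Combining the two reduces matters to a lower bound on this measure.

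The crucial new ingredient is precisely this measure estimate, which I would obtain by applying the inductive hypothesis \eqref{ABPi} to an auxiliary function assembled from $\Gamma_\sigma+m/2$ and a rescaled, translated copy of the $C^{1,1}$ barrier $\eta$ from Proposition \ref{BR}. The bound $\M^+\eta\leq M_2\xi$ with $\mathrm{supp}\,\xi\subset B_{1/4}$ keeps the effective inhomogeneity controlled by $M_2$ on a set whose measure is comparable to $\mu$ defined in \eqref{mu}, while the uniform bound $\|\eta\|_\infty\leq M_1$ propagates through the computation as powers of $M_1$. A Calder\'on--Zygmund iteration (Lemma \ref{CalZyg}) with density threshold $\mu$ then boosts this estimate: each step improves the density by a factor determined by $\mu$, and after roughly $\mu^{-1}\log 2$ steps the accumulated gain yields exactly the factor of $2$ needed to drop from $C_i$ to $C_i/2$. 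This iteration is the origin of the powers $M_1^{1+\mu^{-1}\log 2}$ and $M_1^{1/2+\mu^{-1}\log 2}$ seen in \eqref{e3} and \eqref{e5}.

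Inserting the measure estimate into the Riesz-potential inequality and optimizing in $r$ then yields
\begin{equation*}
m\leq \tfrac{C_i}{2}\|f^+\|_{L^\infty(\{u\leq 0\})}^{(2-\sigma)/2}\|f^+\|_{L^n(\{u\leq 0\})}^{\sigma/2}+\e_0^{-1/n}\|f^+\|_{L^n(\{u\leq 0\})},
\end{equation*}
provided the smallness conditions \eqref{e1}, \eqref{e3}, \eqref{e4} and above all \eqref{e5} are invoked to make the coefficient of the first term equal to $C_i/2=C_{i+1}$ while preserving the non-compounding term $\e_0^{-1/n}\|f^+\|_{L^n}$. The main obstacle is precisely this bookkeeping: setting up the auxiliary function so that the inductive ABP inequality applies with a tractable right-hand side, and tracking constants through the Calder\'on--Zygmund iteration finely enough to obtain a gain of exactly the factor $2$ rather than something weaker, since otherwise the iterated constants would not contract to zero and the limiting procedure $i\to\infty$ would fail to eliminate the $L^\infty$ dependence.
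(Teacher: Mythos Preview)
Your outline follows the paper's strategy, but there are two genuine gaps.

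First, the displayed lower bound on $-P(x_0)$ cannot hold with $c_0$ from \eqref{c0} if you restrict to a \emph{single} annulus $Q_r(x_0)\setminus Q_{r/4^{1/n}}(x_0)$. The constant $c_0$ is defined precisely to absorb the denominator $1-4^{-(2-\sigma)/n}$ coming from the geometric series $\sum_{l\geq 0}4^{-(2-\sigma)l/n}$; a single annulus only contributes $\A(2-\sigma)$, which vanishes like $2-\sigma$ as $\sigma\to 2$, and the resulting bound is useless in the regime $\sigma\in[2-\e_0,2)$. The paper instead proves the measure bound $|\{u\leq u(x_0)/2\}\cap Q_r(x_0)|\geq r^n/2$ for \emph{every} $r\leq r_0$, then sums over all dyadic annuli $Q_{r_l}(x_0)\setminus Q_{r_{l+1}}(x_0)$ with $r_l=4^{-l/n}r_0$ to recover $c_0$. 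This forces you to choose $r_0$ so that the rescaled inhomogeneity satisfies the hypotheses \eqref{h_infty} and \eqref{h_n} for all $r\leq r_0$; that is what produces $r_0=\min\{s_1,s_2,1\}$ and the three-case analysis replacing your ``optimizing in $r$''.

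Second, the auxiliary function is built from $u$, not from $\Gamma_\sigma$: the envelope $\Gamma_\sigma$ is not known to satisfy $\M^-\Gamma_\sigma\leq f$, so the inductive ABP inequality cannot be applied to $\Gamma_\sigma+m/2$. In the paper one rescales $u$ to $v(x)=(u(x_0+rx)-u(x_0))/N_0$, checks $\M^-(v+\eta)\leq h+M_2\xi$, and applies the level-$(\tilde i-1)$ inequality to $v+\eta$; only after the measure estimate for $u$ is in hand does one pass to $\Gamma_\sigma$ via $\Gamma_\sigma\leq u$. Also, the Calder\'on--Zygmund threshold is $1-\mu\e_0$ (not $\mu$), giving $k_0\sim(\mu\e_0)^{-1}\log 2$; the exponent $\mu^{-1}\log 2$ in \eqref{e3}, \eqref{e5} arises from $k_0(2-\sigma)\leq\e_0+\mu^{-1}\log 2$ using $2-\sigma\leq\e_0$, not from $k_0$ itself.
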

In the case of $i=0$, \eqref{ABPi} follows
 directly 
 from  \eqref{ABP0}. 

Assuming \eqref{ABPi} for all viscosity supersolutions of \eqref{super} when $i=\tilde i-1$, 
we shall  show \eqref{ABPi} for those when $i=\tilde i$. 



First, we prove the following lemma:
\begin{lem}\label{PE1}
For $h\in C(\overline{B_{2\sqrt{n}}})$, we assume 
\begin{equation}\label{h_infty}
\|h\|_{L^\infty(B_{2\sqrt{n}})}\leq(C_{\tilde{i}-1}\e_0^{\sigma/2n})^{-2/(2-\sigma)}=:L_{\tilde{i}-1},
\end{equation}
and
\begin{equation}\label{h_n}
\|h\|_{L^n(B_{2\sqrt{n}})}\leq\frac{\e_0^{1/n}}{64\sqrt{n}}=:M_3.
\end{equation}
Let  $v\in L^\infty (\R^n)\cap LSC(\R^n)$ be any nonnegative 
viscosity supersolution of 
$$
\M^-v(x)\leq h(x)\quad\mbox{in }B_{2\sqrt n}.
$$
If $v$ satisfies 
\begin{equation}\label{v_Q3}
\inf_{Q_3}v\leq1,
\end{equation}
then 
it follows that
\begin{equation}\label{pe1}
|\{v\leq M_1\}\cap Q_1|> \mu\e_0,
\end{equation}
where $M_1$ is the constant 
in Proposition \ref{BR}, 
and $\mu$ is defined by \eqref{mu}.
\end{lem}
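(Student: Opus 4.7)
The plan is to apply the inductive hypothesis of Lemma \ref{prop} (namely \eqref{ABPi} with $i=\tilde i-1$) to a suitably rescaled version of the barriered function $v+\eta$, and to derive a contradiction under the assumption $|\{v\leq M_1\}\cap Q_1|\leq\mu\e_0$. Concretely, I would first set $w:=v+\eta$. Since $\eta\leq -2$ on $Q_3$ and $\inf_{Q_3}v\leq 1$, there is $x_0\in\overline{Q_3}$ with $w(x_0)\leq -1$; since $\mathrm{supp}\,\eta\subset B_{2\sqrt n}$ and $v\geq 0$, we have $w\geq 0$ on $\R^n\setminus B_{2\sqrt n}$; and the standard sublinearity $\M^-(v+\eta)\leq \M^- v+\M^+\eta$ together with Proposition \ref{BR} gives $\M^- w\leq h+M_2\xi$ in $B_{2\sqrt n}$. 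Rescaling by $r:=2\sqrt n$ via $\tilde w(x):=w(rx)$, the $\sigma$-homogeneity of $\M^-$ yields $\M^-\tilde w\leq \tilde h$ in $B_1$ with $\tilde h(x):=r^\sigma(h(rx)+M_2\xi(rx))$, while $\tilde w\geq 0$ outside $B_1$ and $\inf_{B_1}\tilde w\leq -1$ (since $Q_{3/r}\subset B_1$). The inductive hypothesis then produces
\[
1\leq C_{\tilde i-1}\|\tilde h^+\|_{L^\infty}^{(2-\sigma)/2}\|\tilde h^+\|_{L^n(\{\tilde w\leq 0\})}^{\sigma/2}+\e_0^{-1/n}\|\tilde h^+\|_{L^n(\{\tilde w\leq 0\})}.
\]

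To close the argument, note $-\eta\leq M_1$ on $\mathrm{supp}\,\eta$, so $\{\tilde w\leq 0\}\subset\{v(r\cdot)\leq M_1\}$, and the contradiction hypothesis translates via $y=rx$ into $|\{\tilde w\leq 0\}\cap Q_{1/r}|\leq r^{-n}\mu\e_0$. Splitting $\tilde h^+\leq r^\sigma h^+(r\cdot)+r^\sigma M_2\xi(r\cdot)$ in Minkowski's inequality, applying \eqref{h_n} to the $h$-part (which gives $r^{\sigma-1}M_3$) and the inclusion $\mathrm{supp}\,\xi(r\cdot)\subset B_{1/(4r)}\subset Q_{1/r}$ together with the contradiction hypothesis to the $\xi$-part (which gives $r^{\sigma-1}M_2\mu^{1/n}\e_0^{1/n}$), and using that $M_3$ and $M_2\mu^{1/n}$ both equal $1/(64\sqrt n)\cdot\e_0^{0}$ and $1/(64\sqrt n)$ respectively, one obtains $\|\tilde h^+\|_{L^n(\{\tilde w\leq 0\})}\leq \e_0^{1/n}/16$. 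For the $L^\infty$ factor, using the concavity $(a+b)^{(2-\sigma)/2}\leq a^{(2-\sigma)/2}+b^{(2-\sigma)/2}$, the defining identity $C_{\tilde i-1}L_{\tilde i-1}^{(2-\sigma)/2}=\e_0^{-\sigma/(2n)}$, and condition \eqref{e1} (which controls $C_{\tilde i-1}M_2^{(2-\sigma)/2}$ via $C_{\tilde i-1}\leq C_0$), one arrives at $C_{\tilde i-1}\|\tilde h^+\|_\infty^{(2-\sigma)/2}\leq 2r^{\sigma(2-\sigma)/2}\e_0^{-\sigma/(2n)}$. Substituting these estimates into the displayed inequality, its right-hand side is bounded by $2r^{\sigma(2-\sigma)/2}16^{-\sigma/2}+1/16$, which, since $\sigma(2-\sigma)/2\leq\e_0$ and $16^{-\sigma/2}\leq 16^{-(2-\e_0)/2}$, is strictly less than $1$ for $\e_0$ small as frozen at the start of Section 3; this contradicts $-\inf_{B_1}\tilde w\geq 1$.

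The main obstacle is the careful bookkeeping of the rescaling factor $r^\sigma$ against the mixed $L^\infty$--$L^n$ structure of \eqref{ABPi}: one has to arrange simultaneously that the $h$-part and the $\xi$-part contribute the same $O(\e_0^{1/n})$ amount to the $L^n$ norm on $\{\tilde w\leq 0\}$, which is precisely why the thresholds in \eqref{h_n} and the constant $\mu$ in \eqref{mu} are calibrated to share the factor $1/(64\sqrt n)$; while keeping the $L^\infty$ prefactor $C_{\tilde i-1}\|\tilde h^+\|_\infty^{(2-\sigma)/2}$ of order $\e_0^{-\sigma/(2n)}$ uniformly in the induction index $i$, which is the reason for the specific threshold \eqref{h_infty} defining $L_{\tilde i-1}$ and for condition \eqref{e1}. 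Once these two estimates are matched, the closeness of $\sigma$ to $2$ automatically makes $r^{\sigma(2-\sigma)/2}$ close to $1$ and $16^{\sigma/2}$ close to $16$, so the two ABP terms together sit comfortably below $1$.
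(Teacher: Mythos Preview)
Your approach is essentially identical to the paper's: form $w=v+\eta$, rescale by $r=2\sqrt n$, apply the inductive estimate \eqref{ABPi} at level $\tilde i-1$, and play the $L^\infty$ and $L^n$ bounds on $h$ together with the support of $\xi$ against the measure of $\{w\le 0\}\cap Q_1\subset\{v\le M_1\}\cap Q_1$. The paper argues directly (it first proves $\|g\|_{L^n(\{w\le0\})}>\e_0^{1/n}/(32\sqrt n)$ and then reads off the measure bound), while you run the contrapositive; logically these are the same.

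One minor bookkeeping point: in your last step you coarsen $\|\tilde h^+\|_{L^n(\{\tilde w\le0\})}\le r^{\sigma-1}\e_0^{1/n}/(32\sqrt n)$ to $\e_0^{1/n}/16$ \emph{before} inserting it into the mixed term, which leaves you with $2r^{\sigma(2-\sigma)/2}16^{-\sigma/2}+1/16$ and forces an extra, unlisted smallness condition on $\e_0$ (of the form $(2\sqrt n)^{\e_0}$ close to $1$). If instead you keep the factor $r^{\sigma-1}$ in the $L^n$ part of the first term, the $r$-powers combine as $r^{\sigma(2-\sigma)/2}\cdot r^{\sigma(\sigma-1)/2}=r^{\sigma/2}$, giving exactly the paper's bound
\[
2\Bigl(\tfrac{2\sqrt n}{32\sqrt n}\Bigr)^{\sigma/2}+\tfrac{(2\sqrt n)^{\sigma-1}}{32\sqrt n}\le 2\cdot 16^{-1/2}+\tfrac{1}{16}=\tfrac{9}{16}<1,
\]
valid for every $\sigma\in(1,2)$ with no additional constraint on $\e_0$ beyond \eqref{e1}--\eqref{e5}.
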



\begin{proof}
Let $\eta$ be 
the function 
in Proposition \ref{BR}. 
We observe that 
$w:=v+\eta$ 
satisfies
\begin{equation*}
\left\{
\begin{split}
\M^-w(x)&\leq h(x)+M_2\xi(x)=:g(x)\quad\mbox{in }B_{2\sqrt{n}},\\
w(x)&\geq0\quad\mbox{in }\R^n\setminus B_{2\sqrt{n}}.
\end{split}
\right.
\end{equation*}
Since $\eta\leq-2$ in $Q_3$
, and \eqref{v_Q3},  
 we obtain $-\inf_{B_{2\sqrt{n}}}w\geq1$.

From the definition of  $\tilde{i}$, we can apply \eqref{ABPi} with $i=\tilde{i}-1$ to $w(2\sqrt{n}x)$, 
 to derive 
\begin{align*}
-\inf_{B_{2\sqrt{n}}}w
&\leq C_{\tilde{i}-1}(2\sqrt{n})^{\sigma/2}\|g\|_{L^\infty(\{v\leq0\})}^{(2-\sigma)/2}\|g\|_{L^n(\{v\leq0\})}^{\sigma/2}+\frac{(2\sqrt{n})^{\sigma-1}}{\e_0^{1/n}}\|g\|_{L^n(\{v\leq0\})}.
\end{align*}

Since 
$C_{\tilde{i}-1}=2^{-\tilde{i}+1}C_0\leq C_0$, 
by hypotheses \eqref{h_infty} and \eqref{e1}, we have 
\begin{align*}
C_{\tilde{i}-1}(2\sqrt{n})^{\sigma/2}\|g\|_{L^\infty}^{(2-\sigma)/2}
&\leq C_{\tilde{i}-1}(2\sqrt{n})^{\sigma/2}(\|f\|_{L^\infty}^ {(2-\sigma)/2}+M_2^{(2-\sigma)/2})\\
&\leq (2\sqrt{n})^{\sigma/2}(\e_0^{-\sigma/2n}+C_0M_2^ {(2-\sigma)/2})\\
&\leq 2(2\sqrt{n})^{\sigma/2}\e_0^{-\sigma/2n}. 
\end{align*}
Hence, we have
\begin{align}\label{lem33}
1\leq 2\{(2\sqrt{n})\e_0^{-1/n}\|g\|_{L^n(\{v\leq0\})}\}^{\sigma/2}+(2\sqrt{n})^{\sigma-1}\e_0^{-1/n}\|g\|_{L^n(\{v\leq0\})}.
\end{align}

Now, we claim
\[
\|g\|_{L^n(\{v\leq0\})}> \frac{\e_0^{1/n}}{32\sqrt{n}}.
\]
Indeed, if not, we have a contradiction because it follows from \eqref{lem33} that
\begin{align*}
1\leq 2\left(\frac{2\sqrt{n}}{32\sqrt{n}}\right)^{\sigma/2}+\frac{(2\sqrt{n})^{\sigma-1}}{32\sqrt{n}}\leq\frac{9}{16}.
\end{align*}
Hence, since $0\leq\xi\leq1$, $supp\ \xi\subset \overline{B_{1/4}}$, 
by assumption \eqref{h_n}, we have
\begin{align*}
\frac{\e_0^{1/n}}{32\sqrt{n}}<\|g\|_{L^n(\{v\leq0\})}
&\leq\|f\|_{L^n(B_1)}+M_2\| \xi\|_{L^n(\{v\leq0\})}\\
&\leq \frac{\e_0^{1/n}}{64\sqrt{n}}+M_2|\{v\leq0\}\cap B_{1/4}|^{1/n}
\end{align*}
Therefore
, 
 we obtain
\begin{align*}
\frac{\e_0}{(64M_2\sqrt{n})^n}< |\{v\leq0\}\cap B_{1/4}|\leq|\{u\leq M_1\}\cap Q_1|.
\end{align*}
By recalling that $\mu=(64M_2\sqrt{n})^{-n}$ is given in \eqref{mu}, 
 \eqref{pe1} is now proved.
\end{proof}

Although the next
 lemma  is rather standard, we give a proof for the reader's convenience. 
 
\begin{lem}\label{PE2}
Let $v$ be as in Lemma \ref{PE1}. Then
\begin{equation}\label{pe2}
|\{v>M^k\}\cap Q_1|\leq(1-\mu\e_0)^k
\end{equation}
for $k=1,2,...$, where $M_1$ and $\mu$ are constants from Proposition \ref{BR} and \eqref{mu}, respectively. 
\end{lem}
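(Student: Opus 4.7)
The plan is to argue by induction on $k$, combining the Calderón-Zygmund cube decomposition (Lemma \ref{CalZyg}) with a rescaled application of Lemma \ref{PE1}. The base case $k=1$ is immediate: since $|Q_1|=1$, Lemma \ref{PE1} gives $|\{v>M_1\}\cap Q_1|\leq 1-\mu\e_0$.

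For the inductive step, assuming \eqref{pe2} for some $k\geq 1$, I would set
\[
A:=\{v>M_1^{k+1}\}\cap Q_1,\qquad B:=\{v>M_1^k\}\cap Q_1,\qquad \delta:=1-\mu\e_0,
\]
and verify the two hypotheses of Lemma \ref{CalZyg}. Condition (a) is immediate from the base case, since $A\subset\{v>M_1\}\cap Q_1$. For condition (b), I would argue by contrapositive: if $Q=Q_r(\bar x)$ is a dyadic cube (so $r=2^{-m}\leq 1/2$) whose predecessor $\tilde Q$ is not contained in $B$, then $|A\cap Q|\leq \delta |Q|$. Lemma \ref{CalZyg} would then give $|A|\leq \delta |B|\leq (1-\mu\e_0)^{k+1}$, closing the induction.

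The key step is the rescaling. If $\tilde x\in\tilde Q$ satisfies $v(\tilde x)\leq M_1^k$, I would introduce
\[
\tilde v(y):=\frac{v(\bar x+ry)}{M_1^k},\qquad \tilde h(y):=\frac{r^\sigma h(\bar x+ry)}{M_1^k}.
\]
Since the center $\bar x'$ of $\tilde Q$ satisfies $|\bar x-\bar x'|_\infty=r/2$, one has $\tilde Q\subset Q_{3r}(\bar x)$, and therefore $y_0:=(\tilde x-\bar x)/r\in Q_3$ with $\tilde v(y_0)\leq 1$, which gives $\inf_{Q_3}\tilde v\leq 1$. The standard scaling of $\M^-$ yields $\M^-\tilde v\leq \tilde h$ in $B_{2\sqrt n}$ (the required inclusion $B_{2\sqrt n\, r}(\bar x)\subset B_{2\sqrt n}$ follows from $\bar x\in Q_1$ and $r\leq 1/2$). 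The inequalities $\sigma>1$, $r\leq 1/2$ and $M_1\geq 2$ (from Proposition \ref{BR}(ii)) then ensure that the scaling factors $r^\sigma$ and $r^{\sigma-1}$ work in our favor, so that $\|\tilde h\|_{L^\infty(B_{2\sqrt n})}\leq L_{\tilde i-1}$ and $\|\tilde h\|_{L^n(B_{2\sqrt n})}\leq M_3$. Lemma \ref{PE1} applies to $\tilde v$ and gives
\[
\mu\e_0<|\{\tilde v\leq M_1\}\cap Q_1|=r^{-n}\,|\{v\leq M_1^{k+1}\}\cap Q|,
\]
which rearranges to $|A\cap Q|<(1-\mu\e_0)|Q|=\delta|Q|$, as desired.

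The main obstacle will be the careful bookkeeping in the rescaling: one must track how $r^\sigma$ and $r^{\sigma-1}$ appear in $\tilde h$ in $L^\infty$ and $L^n$, respectively, and confirm that the hypotheses \eqref{h_infty}-\eqref{h_n} persist at every scale with the same universal constants $L_{\tilde i-1}$ and $M_3$. This relies crucially on $\sigma>1$, $M_1\geq 2$, and the fact that dyadic cubes sit inside $Q_1$ so that the rescaled ball remains in $B_{2\sqrt n}$ where the equation for $v$ holds.
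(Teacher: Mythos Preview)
Your proposal is correct and follows essentially the same approach as the paper: induction on $k$, with the inductive step established via the Calder\'on--Zygmund decomposition (Lemma \ref{CalZyg}) and a rescaled application of Lemma \ref{PE1} on each offending dyadic cube. The paper's proof is slightly more terse (it does not spell out the inclusion $B_{2\sqrt{n}\,r}(\bar x)\subset B_{2\sqrt{n}}$ or the bound $M_1\geq 2$), but the logic and the rescaling are identical to yours.
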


\begin{proof}
For $k=1$, \eqref{pe2} is trivial due to \eqref{pe1}. 
Suppose that \eqref{pe2} holds for $k-1$. 
Let
\begin{equation*}
A:=\{v>M_1^k\}\cap Q_1,\quad B:=\{v>M_1^{k-1}\}\cap Q_1.
\end{equation*}
\eqref{pe2} will be proved if we show that
\begin{equation}\label{cube2}
|A|\leq(1-\mu\e_0)|B|
\end{equation}
and \eqref{cube2} will be deduced by appllying  Lemma \ref{CalZyg}. We need to check conditions there. Clearly $A\subset B\subset Q_1$ and $|A|\leq|\{v>M\}\cap Q_1|\leq(1-\mu\e_0)$. 
We now prove that if $Q$ is a dyadic cube such that
\begin{equation}\label{cube}
|A\cap Q|\geq(1-\mu\e_0)|Q|,
\end{equation}
then $\tilde{Q}\subset B$, where $\tilde{Q}$ is the predecessor of $Q$. 
If not, there exists
 some $Q=Q_{1/2^j}(x^*)$ satisfying \eqref{cube}, and 
 $\tilde{Q}\nsubseteq B$. 
 We then find 
 $\tilde{x}\in\tilde{Q}$ 
 such that $v(\tilde{x})\leq M_1^{k-1}$.

Setting  functions
\[
\tilde{v}(x):=\frac{v(x^*+2^{-j}x)}{M_1^{k-1}}\quad\mbox{and }\tilde{h}(x):=\frac{h(x^*+2^{-j}x)}{2^{\sigma j}M_1^{k-1}},
\]
we observe that $\tilde{v}$ satisfies $\M^-\tilde{v}\leq \tilde{h}$ in $B_{2\sqrt{n}}$. 
Notice $\tilde{v}\geq0$ by definition. 
Since $\tilde{x}\in Q_{3/2^j}(x^*)$, we have $\inf_{Q_{3}}\tilde{v}\leq1$. 
Moreover, we have 
\[
\|\tilde{h}\|_{L^\infty(B_{2\sqrt{n}})}\leq\frac{\|h\|_{L^\infty(B_{2\sqrt{n}})}}{2^{\sigma j}M_1^{k-1}}\leq \|h\|_{L^\infty(B_{2\sqrt{n}})}\leq L_{\tilde{i}-1}
\]
and
\[
\|\tilde{h}\|_{L^n(B_{2\sqrt{n}})}\leq\frac{2^j\|h\|_{L^n(B_{2\sqrt{n}})}}{2^{\sigma j}M_1^{k-1}}\leq \|h\|_{L^n(B_{2\sqrt{n}})}\leq M_3.
\]
Since $\tilde{v}$ 
satisfies the hypotheses 
of 
Lemma \ref{PE1}, it follows that
\begin{equation*}
\mu\e_0<|\{\tilde{v}\leq M_1\}\cap Q_1|=2^{jn}|\{v\leq M_1^k\}\cap Q_{1/2^i}(x^*)|
\end{equation*}
Hence $|Q\setminus A|>\mu\e_0|Q|$, 
which contradicts \eqref{cube}.
\end{proof}


\noindent{\bf Proof of Lemma \ref{prop}} 
We will prove \eqref{ABPi} with $\tilde{i}$ for any viscosity supersolution $u$ of \eqref{super}. 
Let $\hat g\in C_0(\R^n)$ be an arbitrary function satisfying $\hat g\geq f^+\chi_{\{u\leq 0\}\cap B_1}\mbox{ in }\R^n$. 
Without loss of generality, we can assume that $u$ satisfies
\begin{equation*}
\M^-u\leq f^+\chi_{\{u\leq0\}\cap B_1} \leq \hat{g}\quad\mbox{in }\R^n,
\end{equation*}
because $-u^-$ satisfies the above, instead of $u$.  
We shall show
\[
-\inf_{B_1}u\leq C_{\tilde{i}}\|\hat g\|_\infty^{(2-\sigma)/2}\|\hat g\|_n^{\sigma/2}+\e_0^{-1/n}\|\hat g\|_n,
\]
where $\|\hat g\|_\infty:=\|\hat g\|_{L^\infty(\R^n)}$ and $\|\hat g\|_n:=\|\hat g\|_{L^n(\R^n)}$. 
By taking the infimum of $\hat g$ so that $f^+\chi_{\{u\leq 0\}}\leq \hat g$, this inequality 
implies 
\eqref{ABPi} 
when 
$i=\tilde{i}$ for $u$. 

Since we may suppose $\inf_{B_1}u<0$, we can find $x_0\in B_1$ such that
$$\inf_{B_1}u=u(x_0)<0\quad (x_0\in B_1).$$
For $r\in (0,1)$, setting 
$$
N_0:=\fr{r^\sigma}{L_{\tilde i-1}}\| \hat g\|_\infty +\fr{r^{\sigma-1}}{M_3}\| \hat g\|_n,
$$
where $L_{\tilde{i}-1}$ and $M_3$ are constants from Lemma \ref{PE1}, 
we  apply Lemma \ref{PE2} to
\[
u_r(x):=\frac{u(x_0+rx)-u(x_0)}{N_0}\quad\mbox{and}\quad 
h_r(x):=\frac{r^\sigma \hat g(x_0+rx)}{N_0}
\]
to obtain that for $k=1,2,...$, 
\begin{equation}\label{3Eq1}
\begin{array}{rcl}
(1-\mu\e_0)^kr^n
&\geq &r^n|\{u_r>M_1^k\}\cap Q_1|\\
&=&\left|\left\{u>u(x_0)+M_1^k N_0
\right\}\cap Q_r(x_0)\right|.
\end{array}
\end{equation}

We choose a large $k_0\in\N$ such that
\begin{equation}\label{3Eq2}
(1-\mu\e_0)^{k_0}\leq \frac 12<(1-\mu \e_0)^{k_0-1}.
\end{equation}
Next, setting $r_0:=\min\{ s_1,s_2,1\}$, where 
$$
s_1:=\left(\frac{-u(x_0)M_3}{4M_1^{k_0}\|\hat g\|_n}\right)^{1/(\sigma-1)}\quad\mbox{and}\quad 
s_2:=\left(\frac{-u(x_0)L_{\tilde{i}-1}}{4M_1^{k_0}\|\hat g\|_\infty}\right)^{1/\sigma},
$$
we easily observe that for any $0<r\leq r_0$,
$$
M_1^{k_0}N_0
\leq M_1^{k_0}\left(\frac{s_2^\sigma\|\hat g\|_\infty}{L_{\tilde{i}-1}}+\frac{s_1^{\sigma-1}\|\hat g\|_n}{M_3}\right)
=-\frac{u(x_0)}{2}.
$$
Thus, \eqref{3Eq1} together with \eqref{3Eq2} yields
\begin{equation}
\frac{r^n}{2}\geq \le|\le\{ u>\frac{u(x_0)}{2}
\ri\}\cap Q_r(x_0)\ri| \quad (0<r\leq r_0).
\end{equation}
Therefore, we have 
\begin{align*}
\left|\left\{u\leq \frac{u(x_0)}{2}\right\}\cap Q_r(x_0)\right|
\geq \frac{r^n}{2}.
\end{align*}
Let $r_l:=4^{-l/n}r_0$. For each $l=0,1,...$, we have
\begin{align*}
\left|\left\{u\leq \frac{u(x_0)}{2}\right\}\cap (Q_{r_l}(x_0)\setminus Q_{r_{l+1}}(x_0))\right|\geq\frac{r_l^n}{2}-|Q_{l+1}|= \frac{r_l^n}{4}.
\end{align*}
Note that $\Gamma_\sigma\leq u$ in $\R^n$ and $\Gamma_\sigma(x_0)=u(x_0)$. 
Hence, setting 
\begin{equation*}
A_l:=\left\{\Gamma_\sigma\leq \frac{\Gamma_\sigma(x_0)}{2}\right\}\cap (Q_{r_l}(x_0)\setminus Q_{r_{l+1}}(x_0)),
\end{equation*}
we see that  $|A_l|\geq r_l^n/4$ for $l=0,1,...$. 
Noting $|y|\leq 2^{-1}\sqrt nr_l$ for $y\in Q_{r_l}$, we thus calculate as follows: 
\begin{align}
-P(x_0)
&=\A(2-\sigma)\int_{\R^n}-\Gamma_\sigma(x_0+y)|y|^{-n+2-\sigma}dy\nonumber\\
&\geq \A(2-\sigma)\sum_{l=0}^\infty\int_{\{x_0+y\in A_l\}}-\frac{1}{2}\Gamma_\sigma(x_0)|y|^{-n+2-\sigma}dy\nonumber\\
&\geq \A(2-\sigma)\left(-\frac{1}{2}\Gamma_\sigma(x_0)\right)\left(\frac{\sqrt n}{2}\right)^{-n+2-\sigma}\sum_{l=0}^\infty|A_l|r_l^{-n+2-\sigma}\nonumber\\
&\geq \left(\frac{\sqrt{n}}{2}\right)^{-n+2-\sigma}\cdot\frac{-\Gamma_\sigma(x_0)\A(2-\sigma)r_0^{2-\sigma}}{8}\sum_{l=0}^\infty4^{-(2-\sigma)l/n}\nonumber\\ 
&=\left(\frac{\sqrt{n}}{2}\right)^{-n+2-\sigma}\cdot\frac{-\Gamma_\sigma(x_0)\A(2-\sigma)r_0^{2-\sigma}}{8(1-4^{-(2-\sigma)/n})}\nonumber\\&
\geq -c_0\Gamma_\sigma(x_0)r_0^{2-\sigma},\label{prop1}
\end{align}
where $c_0$ is from 
\eqref{c0}. Using \eqref{pre1} and \eqref{prop1}, we obtain
\begin{align*}
-\inf_{B_1}u=-\Gamma_\sigma(x_0)
\leq -c_0^{-1}r_0^{-(2-\sigma)}P(x_0)
&\leq c_0^{-1}\overline{C}r_0^{-(2-\sigma)}\|f^+\|_{L^n(\{u=\Gamma_\sigma\})}\\
&\leq c
_0^{-1}\overline{C}r_0^{-(2-\sigma)}\|\hat g\|_n.
\end{align*}

We shall consider three cases of $r_0>0$. 
If we suppose $r_0=1$, then \eqref{ABP} holds true for $\hat C=c_0^{-1}\overline{C}$. 
Thus, due to \eqref{e4}, we conclude the proof.

We next assume $r_0=s_1$. 
In this case, it follows that 
\begin{align*}
-\inf_{B_1}u
&\leq c_0^{-1}\overline{C}\left(\frac{4M_1^{k_0}\|\hat g\|_n}{-M_3\inf_{B_1} u}\right)^{(2-\sigma)(\sigma-1)^{-1}}\|\hat g\|_n,
\end{align*}
which implies
\[
-\inf_{B_1}u
\leq (c_0^{-1}\overline{C})^{\sigma-1}(4M_1^{k_0}M_3^{-1})^{2-\sigma}\|\hat g\|_n.
\]
Because of $\e_0\geq2-\sigma$ and \eqref{3Eq2}, we have
\begin{align*}
k_0(2-\sigma)
\leq k_0\e_0 \leq \e_0-\frac{\e_0 \log2}{\log(1-\mu\e_0)}
\leq \e_0+\frac{\log 2}{\mu},
\end{align*}
where the last inequality is valid because $z\leq -\log (1-z)$ for $0\leq z<1$. 

Recalling that $M_3=(64\sqrt{n})^{-1}\e_0^{1/n}$, 
 by \eqref{e3}, we conclude that
\begin{equation}\label{prop2}
-\inf_{B_1}u\leq (256\sqrt{n}\e_0^{-1/n})^{\e_0}(c_0^{-1}\overline{C})^{\sigma-1}M_1^{1+\mu^{-1}\log2}\|\hat g\|_n\leq\e_0^{-1/n}\|\hat g\|_n.
\end{equation}

In the case of $r_0=s_2$, we have
\begin{equation*}
-\inf_{B_1}u
\leq c_0^{-1}\overline{C}\left(\frac{4M_1^{k_0}\|\hat g\|_\infty}{-L_{\tilde{i}-1}\inf_{B_1} u}\right)^{(2-\sigma)/\sigma}\|\hat g\|_n.
\end{equation*}
Hence
\[
-\inf_{B_1}u
\leq (c_0^{-1}\overline{C})^{\sigma/2}(4M_1^{k_0}L_{\tilde{i}-1}^{-1})^{(2-\sigma)/2}\|\hat g\|_\infty^{(2-\sigma)/2}\|\hat g\|_n^{\sigma/2}.
\]
Recall that $L_{\tilde{i}-1}=(C_{\tilde{i}-1}\e_0^{\sigma/2n})^{-2/(2-\sigma)}$. 
It follows from \eqref{e5} that
\begin{align}
-\inf_{B_1}u
&\leq 2^{\e_0}(c_0^{-1}\overline{C})^{\sigma/2}M^{\frac12+\mu^{-1}\log2}\e_0^{\sigma/2n}C_{\tilde{i}-1}\|\hat g\|_\infty^{(2-\sigma)/2}\|\hat g\|_n^{\sigma/2}\nonumber\\
&\leq 2^{-1}C_{\tilde{i}-1}\|\hat g\|_\infty^{(2-\sigma)/2}\|\hat g\|_n^{\sigma/2}\nonumber\\
&=C_{\tilde{i}} \|\hat g\|_\infty^{(2-\sigma)/2}\|\hat g\|_n^{\sigma/2}. \label{prop3}
\end{align}

In any case, from \eqref{prop2} and \eqref{prop3}, we have
\[
-\inf_{B_1}u\leq C_{\tilde{i}} \|\hat g\|_\infty^{(2-\sigma)/2}\|\hat g\|_n^{\sigma/2}+\e_0^{-1/n}\|\hat g\|_n.
\]
Taking the infimum of $\hat g$ satisfying $\hat g\geq f^+\chi_{\{u\leq0\}\cap B_1}$ in $\R^n$, \eqref{ABPi} is proved with $\tilde{i}$. 
$\quad\Box$
\vspace{10pt}




\noindent{\bf Acknowledgements:}  The author wishes to express his thanks to Prof. Shigeaki Koike for many stimulating conversations and careful reading of the first draft. 


\end{document}